\DeclareMathOperator*{\grad}{grad}
\DeclareMathOperator*{\Hess}{Hess}
\newcommand{\real}{\mathbb{R}}
\newcommand{\nat}{\mathbb{N}}
\newcommand{\abs}[1]{\left|{#1}\right|}
\newcommand{\norm}[1]{\left\lVert{#1}\right\rVert}
\newcommand{\ip}[2]{\left\langle{#1,#2}\right\rangle}
\newcommand{\tran}{\mathcal{T}}
\newtheorem{definition}{Definition}[section]
\newtheorem{theorem}{Theorem}[section]
\newtheorem{lemma}{Lemma}[section]
\newtheorem{problem}{Problem}[section]
\newtheorem{assumption}{Assumption}[section]
\title{Global Convergence of Hager--Zhang type Riemannian Conjugate Gradient Method}
\author{Hiroyuki Sakai, Hiroyuki Sato, and Hideaki Iiduka}
\date{}
\begin{document}
\maketitle

\begin{abstract}
This paper presents the Hager--Zhang (HZ)-type Riemannian conjugate gradient method that uses the exponential retraction.
We also present global convergence analyses of our proposed method under two kinds of assumptions.
Moreover, we numerically compare our proposed methods with the existing methods by solving two kinds of Riemannian optimization problems on the unit sphere.
The numerical results show that our proposed method has much better performance than the existing methods, i.e., the FR, DY, PRP and HS methods.
In particular, they show that it has much higher performance than existing methods including the hybrid ones in computing the stability number of graphs problem.
\end{abstract}

\section{Introduction}\label{sec:1}
Riemannian optimization has been widely researched along with the developments of real-world applications in various fields,
such as natural language processing \cite{NIPS2017_59dfa2df,9339934}, signal processing \cite{doi:10.1137/110845768},
and computer vision \cite{NIPS2015_dbe272ba,pmlr-v48-kasai16}, in which large-scale problems can be expressed as certain optimization problems on Riemannian manifolds.

Many useful gradient methods \cite{absil2008optimization,sato2021} have been developed for Riemannian optimization that can be obtained by extending the existing methods in Euclidean space to a Riemannian manifold. 
However, such extension is not always easy.
For example, in the Euclidean space setting, the $(k+1)$-th approximation of optimal solutions is $\bm{x}_{k+1} = \bm{x}_k + \alpha_k \bm{\eta}_k$,
where $\alpha_k > 0$, $\bm{x}_k \in \mathbb{R}^n$ is a point at the $k$-th iteration, and $\bm{\eta}_k$ is the search direction. 
However, such an update cannot be defined for general Riemannian manifolds
because of nonlinear Riemannian geometric structure.
We can generalize Riemannian gradient methods using {\em retractions} and {\em transports} that are divided into various types, as described below.

Smith \cite{smith1994} proposed using {\em exponential retraction} and {\em parallel transport} to generalize the optimization methods from Euclidean space to a Riemannian manifold. 
Absil, Mahony, and Sepulchre \cite{absil2008optimization} proposed using a {\em general retraction} that approximates the exponential retraction
and a {\em vector transport} which approximates the parallel transport.
Note that a general retraction (resp. vector transport) is a generalization of the exponential retraction (resp. parallel transport).

We focus on {\em Riemannian conjugate gradient (RCG) methods} as they offer both theoretical and practical benefits.
A theoretical benefit of RCG methods is that we can show that they generate {\em sufficient descent search directions},
which decrease an objective function at every iteration, and converge {\em globally}, i.e., without depending on the choice of the initial point. 
A practical benefit of RCG methods is that they have efficient numerical performances, as shown in the previous studies \cite{absil2008optimization,sato2021}.

\subsection{Previous results}\label{subsec:1.1}
The results for RCG methods that satisfy the sufficient descent condition and global convergence are summarized as in Table \ref{table:1}.

Ring and Wirth \cite{doi:10.1137/11082885X} presented a Fletcher--Reeves (FR) type of RCG method using a general retraction and vector transport,
which is defined by the differentiated retraction, under the strong Wolfe conditions.
The vector transport they used in \cite{doi:10.1137/11082885X} is assumed not to increase the norm of the search direction vector,
which would be unnatural in both theory and practice.
To overcome this limitations, Sato and Iwai \cite{sato2013new} defined a {\em scaled vector transport} and showed convergence of the FR-type RCG method using a general retraction and scaled vector transport.   

Sato \cite{sato2015dai} also investigated a Dai--Yuan (DY) type of RCG method using a general retraction and scaled vector transport and showed that
it generates a sufficient descent direction and converges globally under the Wolfe conditions (``DY" row in Table \ref{table:1}).
Comparison of the results in \cite{sato2013new} with those in \cite{sato2015dai} reveals that
the DY-type RCG method has a better global convergence than the FR-type one because it is based on the assumption of the Wolfe conditions, which are weaker than the strong Wolfe conditions.

A recently introduced hybrid RCG method \cite{sakai2020hybrid} is defined by combining the good global convergence of the DY-type RCG method (see description above) with the efficient numerical performance of a Hestenes--Stiefel (HS) type of RCG method. 
This hybrid method generates a sufficient descent direction and converges globally under the strong Wolfe conditions (``HS-DY hybrid" row in Table~\ref{table:1}).
Another recently introduced hybrid method \cite{sakai2020sufficient} combines the FR-tysspe RCG method with a Polak--Ribi\`ere--Polyak (PRP) type of RCG method.
This hybrid method also generates a sufficient descent direction and converges globally under the strong Wolfe conditions (``FR-PRP hybrid" row in Table~\ref{table:1}).

\subsection{Goals}\label{subsec:1.2}
As described in Section \ref{subsec:1.1}, and shown in Table \ref{table:1}, existing RCG methods are capable for solving Riemannian optimization problems.
Nevertheless, there are other powerful conjugate gradient methods in Euclidean space that could be generalized to Riemannian manifolds. 
A particularly interesting Euclidean conjugate gradient (ECG) method is the Hager--Zhang (HZ) type\footnote{\url{http://users.clas.ufl.edu/hager/papers/Software/}} \cite{doi:10.1137/030601880} of conjugate gradient method, which is a very efficient conjugate gradient method for Euclidean optimization.
Accordingly, the first goal of this paper 
is to clarify whether or not the HZ-type ECG method can be theoretically extended to a Riemannian manifold so as to guarantee its global convergence.
Sakai and Iiduka \cite{sakai2020sufficient} showed that the HZ-type RCG method using a general retraction and scaled vector transport generates a sufficient descent direction (``HZ" row in Table~\ref{table:1}).
This sufficient descent property does not depend on the line search conditions.
However, the global convergence of the HZ-type RCG method has not been determined.
 
The second goal is to determine whether that the HZ-type RCG method performs better than the existing RCG methods listed in Table \ref{table:1}.  
The HZ-type ECG method tends to perform better in the Euclidean space setting than other ECG methods.
Therefore, it would be useful to know whether the HZ-type RCG method
has the same performance as the HZ-type ECG method.

\subsection{Contributions}
This paper makes two contributions. 
The first contribution is to show that the HZ-type RCG method using the {\em exponential retraction} and vector transport converges globally under the Wolfe conditions (Theorem~\ref{thm:converge1}).
This contribution is an extension of Theorem 2.2 in \cite{doi:10.1137/030601880} to a Riemannian manifold and shows theoretically for the first time the global convergence of the HZ-type RCG method. 
The second contribution is to provide numerical comparisons of the HZ-type RCG method with the existing RCG methods. 
The numerical results of this paper indicate that the HZ-type RCG method performs better than the existing ones in computing the stability number of graphs problem.

\subsection{Difficulty to prove Theorem \ref{thm:converge1}}
A way to guarantee the global convergence property of the HZ-type ECG method in the Euclidean space is to assume the strong convexity of the objective function $f$.
We thus assume that there exists a constant $\mu > 0$ such that
\begin{equation}
\label{eq:E_strong_convexity}
    (\nabla f(\bm{x}) - \nabla f(\bm{y}))^\top (\bm{x}-\bm{y}) \ge \mu \norm{\bm{x}-\bm{y}}^2
\end{equation}
holds for any $\bm{x}, \bm{y} \in \real^n$, which is equivalent to the condition that the smallest eigenvalue of the Hessian $\nabla^2 f(\bm{x})$ for any $\bm{x} \in \real^n$ is not less than $\mu$.
Expression~\eqref{eq:E_strong_convexity} is more useful in convergence analysis.

However, \eqref{eq:E_strong_convexity} cannot be directly generalized to the Riemannian case.
Instead, a natural definition of the strong convexity of $f$ on a Riemannian manifold $M$ is that there exists a constant $\mu > 0$ such that, for any $x \in M$, the smallest eigenvalue of the Riemannian Hessian $\Hess f(x)$ is not less than $\mu$.
In Theorem \ref{thm:converge1}, we have to start with this condition and without a Riemannian counterpart of \eqref{eq:E_strong_convexity}.

Noting that \eqref{eq:E_strong_convexity} is used to prove $(\nabla f(\bm{x}_{k+1}) - \nabla f(\bm{x}_k))^\top (\bm{x}_{k+1} - \bm{x}_k) \ge \mu \alpha_k\norm{\bm{\eta}_k}^2$ in Euclidean space,
we need to show the Riemannian counterpart of this inequality, not that of~\eqref{eq:E_strong_convexity}.
Fortunately, by imposing the Wolfe conditions on the step length, we can directly prove the desired inequality~\eqref{eq:bound1} from the assumption of the strong convexity of $f$, i.e., the condition that the eigenvalue of the Riemannian Hessian is uniformly lower bounded.

The remainder of this paper is organized as follows.
Section \ref{sec:2} gives the mathematical preliminaries, including descriptions of retraction, vector transport, and existing RCG methods.
Section \ref{sec:3} presents our results for the HZ-type RCG method.
Section \ref{sec:4} provides numerical comparisons.
Section \ref{sec:5} briefly summarizes the key points.

\begin{landscape}
\begin{table}[htbp]
\caption{RCG results of previous studies and our results}
\label{table:1}
\begin{tabular}{lllll}
\bottomrule
\multirow{4}{*}{} & \multicolumn{4}{c}{Riemannian conjugate gradient methods} \\
\cmidrule(lr){2-5}                  
                  & \multicolumn{2}{c}{\multirow{2}{*}{Exponential retraction and}} & \multicolumn{2}{c}{\multirow{2}{*}{General retraction and}} \\
                  & & \\
                  & \multicolumn{2}{c}{its differentiation (as vector transport)}                  & \multicolumn{2}{c}{its (scaled) differentiation (as vector transport)} \\
\cmidrule(lr){2-5}                  
                  & Sufficient descent condition & Global convergence & Sufficient descent condition & Global convergence \\
\hline 
                
\multirow{2}{*}{} & \multirow{2}{*}{Ring--Wirth (2012) \cite{doi:10.1137/11082885X}} & \multirow{2}{*}{Ring--Wirth (2012) \cite{doi:10.1137/11082885X}} & \multirow{2}{*}{Ring--Wirth (2012) \cite{doi:10.1137/11082885X}} & \multirow{2}{*}{Ring--Wirth (2012) \cite{doi:10.1137/11082885X}} \\
\multirow{2}{*}{FR} & \multirow{2}{*}{Sato--Iwai (2015) \cite{sato2013new}} & \multirow{2}{*}{Sato--Iwai (2015) \cite{sato2013new}} & \multirow{2}{*}{Sato--Iwai (2015) \cite{sato2013new}} & \multirow{2}{*}{Sato--Iwai (2015) \cite{sato2013new}} \\
\multirow{2}{*}{} & \multirow{2}{*}{(strong Wolfe conditions)} & \multirow{2}{*}{(strong Wolfe conditions)} & \multirow{2}{*}{(strong Wolfe conditions)} & \multirow{2}{*}{(strong Wolfe conditions)} \\

\multirow{2}{*}{} & \multirow{2}{*}{} & \multirow{2}{*}{} & \multirow{2}{*}{} & \multirow{2}{*}{} \\

\hline
\multirow{2}{*}{DY} & \multirow{2}{*}{Sato (2016) \cite{sato2015dai}} & \multirow{2}{*}{Sato (2016) \cite{sato2015dai}} & \multirow{2}{*}{Sato (2016) \cite{sato2015dai}} & \multirow{2}{*}{Sato (2016) \cite{sato2015dai}} \\
\multirow{2}{*}{} & \multirow{2}{*}{(Wolfe conditions)} & \multirow{2}{*}{(Wolfe conditions)} & \multirow{2}{*}{(Wolfe conditions)} & \multirow{2}{*}{(Wolfe conditions)} \\
\multirow{2}{*}{} & \multirow{2}{*}{} & \multirow{2}{*}{} & \multirow{2}{*}{} & \multirow{2}{*}{} \\

\hline
\multirow{2}{*}{Hybrid} & \multirow{2}{*}{Sakai--Iiduka (2020) \cite{sakai2020hybrid}} & \multirow{2}{*}{Sakai--Iiduka (2020) \cite{sakai2020hybrid}} & \multirow{2}{*}{Sakai--Iiduka (2020) \cite{sakai2020hybrid}} & \multirow{2}{*}{Sakai--Iiduka (2020) \cite{sakai2020hybrid}} \\
\multirow{2}{*}{(HS--DY)} & \multirow{2}{*}{(strong Wolfe conditions)} & \multirow{2}{*}{(strong Wolfe conditions)} & \multirow{2}{*}{(strong Wolfe conditions)} & \multirow{2}{*}{(strong Wolfe conditions)} \\
\multirow{2}{*}{} & \multirow{2}{*}{} & \multirow{2}{*}{} & \multirow{2}{*}{} & \multirow{2}{*}{} \\

\hline
\multirow{2}{*}{Hybrid} & \multirow{2}{*}{Sakai--Iiduka (2021) \cite{sakai2020sufficient}} & \multirow{2}{*}{Sakai--Iiduka (2021) \cite{sakai2020sufficient}} & \multirow{2}{*}{Sakai--Iiduka (2021) \cite{sakai2020sufficient}} & \multirow{2}{*}{Sakai--Iiduka (2021) \cite{sakai2020sufficient}} \\
\multirow{2}{*}{(FR--PRP)} & \multirow{2}{*}{(strong Wolfe conditions)} & \multirow{2}{*}{(strong Wolfe conditions)} & \multirow{2}{*}{(strong Wolfe conditions)} & \multirow{2}{*}{(strong Wolfe conditions)} \\
\multirow{2}{*}{} & \multirow{2}{*}{} & \multirow{2}{*}{} & \multirow{2}{*}{} & \multirow{2}{*}{} \\

\hline
\multirow{2}{*}{} & \multirow{2}{*}{} & \multirow{2}{*}{} & \multirow{2}{*}{} & \multirow{2}{*}{} \\
\multirow{2}{*}{HZ} & \multirow{2}{*}{Sakai--Iiduka (2021) \cite{sakai2020sufficient}} & \multirow{2}{*}{\textbf{this work}} & \multirow{2}{*}{Sakai--Iiduka (2021) \cite{sakai2020sufficient}} & \multirow{2}{*}{------} \\
\multirow{2}{*}{} & \multirow{2}{*}{(without conditions)} & \multirow{2}{*}{(Wolfe conditions)} & \multirow{2}{*}{(without conditions)} & \multirow{2}{*}{} \\
\multirow{2}{*}{} & \multirow{2}{*}{} & \multirow{2}{*}{} & \multirow{2}{*}{} & \multirow{2}{*}{} \\
\toprule
\end{tabular}
See Section \ref{sec:2} for definitions of retraction, vector transport, FR, DY, HS-DY hybrid, and FR-PRP hybrid, and (strong) Wolfe conditions and Section \ref{sec:3} for definition of HZ-type RCG method.
\end{table} 
\end{landscape}

\section{Mathematical Preliminaries}\label{sec:2}
\subsection{Notation, definitions, and lemma}\label{subsec:2.1}
Let $(M, \ip{\cdot}{\cdot})$ be a connected geodesically complete Riemannian manifold, where $\ip{\cdot}{\cdot}_x:T_xM \times T_xM \rightarrow \real$ is a Riemannian metric at a point $x \in M$.
Here, $T_xM$ is a tangent space at a point $x \in M$, and $TM$ is a tangent bundle of $M$; i.e., $TM := \bigcup_{x \in M}T_xM$.
Let $\exp_x : T_xM \rightarrow M$ be the exponential map at $x \in M$ and $\oplus$ be the Whitney sum defined as follows (see \cite[Subchapter I.3 (p.16 (II))]{sakai1996riemannian}):
\begin{align*}
TM \oplus TM := \{(\xi , \eta) : \xi, \eta \in T_xM ,x \in M \}.
\end{align*}

An unconstrained optimization problem on $M$ is expressed as follows (see \cite{absil2008optimization, sakai2020hybrid, sakai2020sufficient, sato2015dai, sato2013new}):
\begin{problem}~\label{pbl:main}
Let $f:M\rightarrow\real$ be smooth. Then, we would like to
\begin{align*}
\text{minimize } f(x) \text{ subject to } x \in M.
\end{align*}
\end{problem}
To generalize line search optimization algorithms to Riemannian manifolds, the notions of a retraction and a vector transport are used.

\begin{definition}
[Retraction]~\label{def:retraction}
A retraction  (see \cite[Chapter 4, Definition 4.1.1]{absil2008optimization}) is a smooth map $R:TM{\rightarrow}M$ that has the following properties.
\begin{itemize}
\item $R_x(0_x)=x$;
\item With the canonical identification $T_{0_x}T_xM \simeq T_xM$, $R_x$ satisfies
\begin{align*}
(dR_x)_{0_x}(\xi)=\xi
\end{align*}
for all $\xi \in T_xM$,
\end{itemize}
where $0_x$ denotes the zero element of $T_xM$ and $R_x$ denotes the restriction of $R$ to $T_xM$.
\end{definition}

\begin{definition}
[Vector transport]~\label{def:vector transport}
A vector transport (see \cite[Chapter 8, Definition 8.1.1]{absil2008optimization}) is a smooth map $\tran:TM \oplus TM \rightarrow TM$ that has the following properties.
\begin{itemize}
\item There exists a retraction $R$, called the retraction associated with $\tran$, such that $\tran_{\eta}(\xi) \in T_{R_x(\eta)}M$ for all $x \in M$ and for all $\eta ,\xi \in T_xM$;
\item $\tran_{0_x}(\xi)=\xi$ for all $\xi \in T_xM$;
\item $\tran_{\eta}(a\xi + b\zeta)=a\tran_{\eta}(\xi)+b\tran_{\eta}(\zeta)$ for all $a,b \in \real$ and for all $\eta ,\xi ,\zeta \in T_xM$.
\end{itemize}
\end{definition}

\begin{lemma}[The Gauss lemma \cite{sakai1996riemannian}]~\label{lem:Gauss}
For any point $p \in M$, any $X \in T_pM$ and any $Y \in T_X(T_{p}M) \simeq T_pM$,
\begin{align*}
\ip{(d\exp_p)_X(X)}{(d\exp_p)_X(Y)}_{\exp_p(X)} = \ip{X}{Y}_p
\end{align*}
\end{lemma}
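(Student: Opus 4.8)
The plan is to prove the Gauss lemma by the classical variational argument, handling the radial and tangential parts of $Y$ separately. Since $T_pM$ is a vector space, the canonical identification $T_X(T_pM) \simeq T_pM$ lets us regard $Y$ as an element of $T_pM$, and I would decompose $Y = aX + Y^{\perp}$ with $a \in \real$ and $\ip{X}{Y^{\perp}}_p = 0$. Because $(d\exp_p)_X$ is linear and $\ip{\cdot}{\cdot}$ is bilinear, and because the right-hand side $\ip{X}{Y}_p = a\norm{X}_p^2 + \ip{X}{Y^\perp}_p = a\norm{X}_p^2$ is likewise additive in $Y$, it suffices to verify the claimed identity for $Y = X$ and for $Y = Y^{\perp}$.

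For $Y = X$, I would use the geodesic $\gamma(t) := \exp_p(tX)$. The chain rule gives $\gamma'(t) = (d\exp_p)_{tX}(X)$, hence $\gamma'(1) = (d\exp_p)_X(X)$ while $\gamma'(0) = X$ (using that $(d\exp_p)_{0_p}$ is the identity, a property already recorded in Definition~\ref{def:retraction}). Since a geodesic has constant speed, $\norm{(d\exp_p)_X(X)}_{\exp_p(X)} = \norm{\gamma'(0)}_p = \norm{X}_p$, which is exactly the assertion in this case.

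The substantive step is the case $\ip{X}{Y}_p = 0$. I would choose a smooth curve $s \mapsto X(s) \in T_pM$ with $X(0) = X$, $X'(0) = Y$, and $\norm{X(s)}_p$ independent of $s$; this is possible precisely because $Y$ is orthogonal to $X$, so $X(s)$ may be taken to run along the sphere of radius $\norm{X}_p$ in $T_pM$. Setting $F(t,s) := \exp_p(tX(s))$ for $t \in [0,1]$, each curve $t \mapsto F(t,s)$ is a geodesic; writing $T := \partial_t F$ and $S := \partial_s F$, the chain rule gives $T(1,0) = (d\exp_p)_X(X)$, $S(1,0) = (d\exp_p)_X(Y)$, and $S(0,s) = 0$. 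Using $\nabla_t T = 0$ (geodesic equation), the symmetry lemma $\nabla_t S = \nabla_s T$ (torsion-freeness of the Levi--Civita connection), metric compatibility, and the constant geodesic speed $\norm{T(t,s)}_{F(t,s)} = \norm{X(s)}_p$, I would compute
\[
\frac{\partial}{\partial t}\ip{T}{S} = \ip{T}{\nabla_t S} = \ip{T}{\nabla_s T} = \frac{1}{2}\frac{\partial}{\partial s}\norm{T}^2 = \frac{1}{2}\frac{\partial}{\partial s}\norm{X(s)}_p^2 = 0 .
\]
Thus $\ip{T}{S}$ is constant in $t$, and since $S(0,\cdot) \equiv 0$ it vanishes identically; evaluating at $t = 1$, $s = 0$ yields $\ip{(d\exp_p)_X(X)}{(d\exp_p)_X(Y)}_{\exp_p(X)} = 0 = \ip{X}{Y}_p$. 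Reassembling the radial and tangential pieces by linearity then completes the proof.

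I expect the orthogonal case to be the main obstacle: one must set up the geodesic variation $F(t,s)$ carefully and invoke the symmetry lemma for the mixed covariant derivative, whereas the radial case and the reduction by linearity are routine.
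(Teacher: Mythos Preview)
Your argument is correct and is the standard proof of the Gauss lemma: decompose $Y$ into radial and tangential components, handle $Y=X$ via the constant speed of the geodesic $t\mapsto\exp_p(tX)$, and handle $Y\perp X$ via a geodesic variation $F(t,s)=\exp_p(tX(s))$ together with the symmetry lemma $\nabla_tS=\nabla_sT$ and metric compatibility. Each step is valid as you have written it.

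For comparison, note that the paper does \emph{not} supply its own proof of Lemma~\ref{lem:Gauss}; the lemma is simply quoted with a citation to Sakai's textbook and then used (specifically, to obtain \eqref{eq:preserve}). So there is no alternative argument in the paper to contrast with yours. What you have written is essentially the proof one finds in the cited reference and in other standard Riemannian geometry texts, so in that sense your approach matches the intended source.
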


\subsection{Existing RCG Methods and Wolfe conditions}\label{subsec:2.2}
The RCG method \cite{absil2008optimization, sakai2020hybrid,sakai2020sufficient,sato2015dai,sato2013new} is described as
\begin{align}
x_{k+1} &= R_{x_k}(\alpha_k\eta_k), \label{eq:RCG1}\\
\eta_{k} &=
\begin{cases}
-g_k & k=0,\\
-g_k + \beta_k\tran_{\alpha_{k-1}\eta_{k-1}}(\eta_{k-1}) & k\geq 1,
\end{cases} \label{eq:RCG2}
\end{align}
where $g_k$ is the Riemannian gradient of $f$ at $x_k$, denoted by $\grad f(x_k)$,
$\alpha_k > 0$ is the positive step size, and $\beta_{k+1} \in \real$ is a parameter chosen suitably.
The $\beta_{k+1} \in \real$ parameters used in existing RCG methods are
\begin{align}
&\beta_{k+1}^\mathrm{FR}
= \frac{\|g_{k+1}\|_{x_{k+1}}^2}{\|g_{k}\|_{x_{k}}^2}, \label{FR} \\
&\beta_{k+1}^\mathrm{PRP}
= \frac{\ip{g_{k+1}}{y_{k}}_{x_{k+1}}}{\|g_{k}\|_{x_{k}}^2}, \\
&\beta_{k+1}^\mathrm{HS}
=\frac{\ip{g_{k+1}}{y_{k}}_{x_{k+1}}}{\ip{g_{k+1}}{\tran_{\alpha_k\eta_k}(\eta_k)}_{x_{k+1}}-\ip{g_{k}}{\eta_{k}}_{x_{k}}}, \label{eq:oldHS} \\
&\beta_{k+1}^\mathrm{DY}
= \frac{\|g_{k+1}\|_{x_{k+1}}^2}{\ip{g_{k+1}}{\tran_{\alpha_k\eta_k}(\eta_k)}_{x_{k+1}}-\ip{g_{k}}{\eta_{k}}_{x_{k}}}, \label{DY} \\
&\beta_{k+1}^\mathrm{Hybrid(HS-DY)}
= \max \left\{ 0, \min \left\{ \beta_{k+1}^\mathrm{HS}, \beta_{k+1}^\mathrm{DY} \right\} \right\}, \label{HS-DY} \\
&\beta_{k+1}^\mathrm{Hybrid(FR-PRP)}
= \max \left\{ 0, \min \left\{ \beta_{k+1}^\mathrm{FR}, \beta_{k+1}^\mathrm{PRP} \right\} \right\}, \label{FR-PRP}
\end{align}
where $y_{k} := g_{k+1} - \tran_{\alpha_k\eta_k}(g_k)$.
To determine step size $\alpha_k$ in \eqref{eq:RCG1}, we use line searches that satisfy the {\em Wolfe conditions} (see \cite{sakai2020hybrid, sakai2020sufficient, sato2015dai, sato2013new}),
\begin{align}
&f(R_{x_k}(\alpha_k\eta_k)) \leq f(x_k)+c_1\alpha_k\ip{g_k}{\eta_k}_{x_k}, \label{eq:Wolfe1}\\
&\ip{g_{k+1}}{\tran_{\alpha_k\eta_k}(\eta_k)}_{x_{k+1}} \geq c_2 \ip{g_k}{\eta_k}_{x_k}, \label{eq:Wolfe2}
\end{align}
where $0 < c_1 < c_2 <1$.
When \eqref{eq:Wolfe2} is replaced with 
\begin{align}
\left| \ip{g_{k+1}}{\tran_{\alpha_k\eta_k}(\eta_k)}_{x_{k+1}} \right| 
\leq c_2 \left| \ip{g_k}{\eta_k}_{x_k} \right|, \label{eq:Wolfe3}
\end{align}
\eqref{eq:Wolfe1} and \eqref{eq:Wolfe3} are called {\em strong Wolfe conditions}. 

Search direction $\eta_k$ defined by \eqref{eq:RCG2} is said to be 
a {\em sufficient descent direction} if there exists $\kappa > 0$ such that, for all $k = 0,1,\ldots$, 
\begin{align*}
\langle g_k, \eta_k \rangle \leq - \kappa \|g_k\|_{x_k}^2.
\end{align*}

Let us first consider the FR-type RCG method, i.e., the RCG method \eqref{eq:RCG1} and \eqref{eq:RCG2}, using a general retraction and scaled vector transport,
with \eqref{FR}. It is guaranteed to generate a sufficient descent direction and to converge globally under strong Wolfe conditions [\eqref{eq:Wolfe1} and \eqref{eq:Wolfe3}],
\cite{doi:10.1137/11082885X,sato2013new} (see also Table \ref{table:1})
 
Next, let us consider the DY-type RCG method, i.e., the RCG method \eqref{eq:RCG1} and \eqref{eq:RCG2}, using a general retraction and scaled vector transport, with \eqref{DY}.
It is guaranteed to generate a sufficient descent direction and to converge globally under Wolfe conditions [\eqref{eq:Wolfe1} and \eqref{eq:Wolfe2}] \cite{sato2015dai} (see also Table \ref{table:1}).
A hybrid method using either \eqref{HS-DY} or \eqref{FR-PRP} also generates a sufficient descent direction and converges globally \cite{sakai2020hybrid,sakai2020sufficient} (see also Table \ref{table:1}).

\section{HZ-type RCG Method}\label{sec:3}
\subsection{Assumptions}
The parameter $\beta_{k+1}$ used in the HZ-type RCG method \cite{sakai2020hybrid, sakai2020sufficient} is defined by
\begin{align}~\label{eq:oldHZ}
\beta_{k+1}^\mathrm{HZ} = \beta^\mathrm{HS}_{k+1} -
\mu\frac{\norm{y_{k}}_{x_{k+1}}^2\ip{g_{k+1}}{\tran_{\alpha_k\eta_k}(\eta_k)}_{x_{k+1}}}{\left(\ip{g_{k+1}}{\tran_{\alpha_k\eta_k}(\eta_k)}_{x_{k+1}}-\ip{g_{k}}{\eta_{k}}_{x_{k}}\right)^2},
\end{align}
where $\mu > 1/4$ and $y_{k} := g_{k+1} - \tran_{\alpha_k\eta_k}(g_k)$.

In this paper, we use the exponential map as a retraction, i.e., $R := \exp$.
Moreover, we use the vector transport defined by the differential of the exponential retraction; i.e.,
\begin{align*}
\tran : TM \oplus TM \rightarrow TM : (\eta ,\xi) \mapsto \tran_{\eta}(\xi) := (d\exp_x)_\eta(\xi),
\end{align*}
for $\eta ,\xi \in T_xM$.
From the Gauss lemma (Lemma \ref{lem:Gauss}), we have
\begin{align}~\label{eq:preserve}
\ip{g_{k}}{\eta_{k}}_{x_{k}} = \ip{\tran_{\alpha_k\eta_k}(g_{k})}{\tran_{\alpha_k\eta_k}(\eta_{k})}_{x_{k+1}}.
\end{align}
This means that \eqref{eq:oldHZ} and \eqref{eq:oldHS} can be written as
\begin{align}
\beta_{k+1}^\mathrm{HZ} &= \beta^\mathrm{HS}_{k+1} -
\mu\frac{\norm{y_{k}}_{x_{k+1}}^2\ip{g_{k+1}}{\tran_{\alpha_k\eta_k}(\eta_k)}_{x_{k+1}}}{\ip{y_{k}}{\tran_{\alpha_k\eta_k}(\eta_k)}_{x_{k+1}}^2}, \label{eq:HZ} \\
\beta_{k+1}^\mathrm{HS}&=\frac{\ip{g_{k+1}}{y_{k}}_{x_{k+1}}}{\ip{y_{k}}{\tran_{\alpha_k\eta_k}(\eta_k)}_{x_{k+1}}}, \label{eq:HS}
\end{align}
respectively.
Therefore, the HZ-type RCG method with the exponential retraction can be described as Algorithm \ref{alg:RCG}.
\begin{algorithm}
\caption{HZ-type RCG method with exponential retraction for solving Problem \ref{pbl:main} \cite{absil2008optimization, sakai2020hybrid, sakai2020sufficient} \label{alg:RCG}}
\begin{algorithmic}[1]
\REQUIRE Initial point $x_0 \in M$, convergence tolerance $\epsilon > 0$.
\ENSURE Sequence $\{x_k\}_{k=0,1,\cdots} \subset M$.
\STATE Set $\eta_0 = -g_0 := -\grad f(x_0)$.
\STATE $k \leftarrow 0.$
\WHILE{$\norm{g_k}_{x_k} > \epsilon$}
\STATE Compute $\alpha_k > 0$ satisfying Wolfe conditions \eqref{eq:Wolfe1} and \eqref{eq:Wolfe2}.
\STATE Set \begin{align*}x_{k+1}=\exp_{x_k}(\alpha_k\eta_k),\end{align*}
\STATE Compute $g_{k+1} := -\grad f(x_{k+1})$ and $\beta_{k+1}$ as \eqref{eq:HZ} and set search direction 
\begin{align*}\eta_{k+1} = -g_{k+1} + \beta_{k+1}(d\exp_{x_k})_{\alpha_k\eta_k}(\eta_k). \end{align*}
\STATE $k \leftarrow k + 1.$
\ENDWHILE
\end{algorithmic}
\end{algorithm}
In addition, we also consider the modified HZ method (see \cite[(1.6)]{hager2005new}), by replacing $\beta_{k+1}^\mathrm{HZ}$ in step 6 of Algorithm \ref{alg:RCG} by
\begin{align}~\label{eq:modified HZ}
\hat{\beta}_{k+1}^\mathrm{HZ} := \max\{\beta_{k+1}^\mathrm{HZ},\zeta_{k+1}\}, \quad \zeta_{k+1} := -\frac{1}{\norm{\eta_{k+1}}_{x_{k+1}}\min\{\zeta ,\norm{g_{k+1}}_{x_{k+1}}\}},
\end{align}
where $\zeta > 0$ is a constant.

We also consider the modified HZ method (see \cite[(1.6)]{hager2005new}) by replacing $\beta_{k+1}^\mathrm{HZ}$ in step 6 of Algorithm \ref{alg:RCG} with
\begin{align}~\label{eq:modified HZ}
\hat{\beta}_{k+1}^\mathrm{HZ} := \max\{\beta_{k+1}^\mathrm{HZ},\zeta_{k+1}\}, \quad \zeta_{k+1} := -\frac{1}{\norm{\eta_{k+1}}_{x_{k+1}}\min\{\zeta ,\norm{g_{k+1}}_{x_{k+1}}\}},
\end{align}
where $\zeta > 0$ is a constant.

We consider Algorithm \ref{alg:RCG} under Assumption \ref{asm:Lipschitz} (see \cite[Theorem 2]{doi:10.1137/11082885X}) and Assumption \ref{asm:A} described below.

\begin{assumption}~\label{asm:Lipschitz}
The objective function $f:M\rightarrow\real$ in Problem \ref{pbl:main} is smooth and bounded below, and $f \circ \exp_{x_k}:T_{x_k}M \rightarrow \real$ is Lipschitz continuously differentiable on $\mathrm{span}\{\eta_k\}$ with uniform Lipschitz constant $L>0$.
\end{assumption}

The following is Zoutendijk's theorem (Theorem \ref{thm:Zoutendijk}) for Riemannian manifolds under Assumption \ref{asm:Lipschitz}.

\begin{theorem}[Zoutendijk]~\label{thm:Zoutendijk}
Let $\{x_k\}_{k=0,1,\cdots} \subset M$ be a sequence generated by Algorithm \ref{alg:RCG}.
Suppose that Assumption \ref{asm:Lipschitz} holds.
If each step size $\alpha_k>0$ satisfies Wolfe conditions \eqref{eq:Wolfe1} and \eqref{eq:Wolfe2}, then
\begin{align}
\label{eq:Zoutendijk}
\sum_{k=0}^\infty \frac{\ip{g_k}{\eta_k}_{x_k}^2}{\norm{\eta_k}^2_{x_k}} < \infty .
\end{align}
\end{theorem}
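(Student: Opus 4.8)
The plan is to transcribe the classical Euclidean proof of Zoutendijk's theorem after pulling $f$ back to a tangent space along each search ray. Fix $k$ and define the smooth scalar function $\phi_k(t) := (f\circ\exp_{x_k})(t\eta_k)$, which is defined for all $t\in\real$ because $M$ is geodesically complete. Writing $c(t):=\exp_{x_k}(t\eta_k)$, the chain rule together with the definition $\tran_\eta(\xi)=(d\exp_x)_\eta(\xi)$ gives $c'(t)=\tran_{t\eta_k}(\eta_k)$ and hence
\[
\phi_k'(t)=\ip{\grad f(c(t))}{\tran_{t\eta_k}(\eta_k)}_{c(t)} .
\]
By the normalization properties of $R=\exp$ and $\tran$ (Definitions~\ref{def:retraction} and~\ref{def:vector transport}) we get $\phi_k'(0)=\ip{g_k}{\eta_k}_{x_k}$, and by construction $\phi_k'(\alpha_k)=\ip{g_{k+1}}{\tran_{\alpha_k\eta_k}(\eta_k)}_{x_{k+1}}$. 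Thus the Wolfe conditions \eqref{eq:Wolfe1} and \eqref{eq:Wolfe2} are exactly the one-dimensional Wolfe conditions $\phi_k(\alpha_k)\le\phi_k(0)+c_1\alpha_k\phi_k'(0)$ and $\phi_k'(\alpha_k)\ge c_2\phi_k'(0)$ for $\phi_k$ on $[0,\alpha_k]$. We may assume $g_k\neq 0$ for every $k$ (otherwise the sum in \eqref{eq:Zoutendijk} is trivially finite), and then $\phi_k'(0)=\ip{g_k}{\eta_k}_{x_k}<0$ since $\eta_k$ is a descent direction.

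First I would derive a uniform lower bound on the step size. By Assumption~\ref{asm:Lipschitz} the gradient of $f\circ\exp_{x_k}$ is $L$-Lipschitz on $\mathrm{span}\{\eta_k\}$, a set containing the whole segment $\{t\eta_k:t\in[0,\alpha_k]\}$; a Cauchy--Schwarz estimate then shows that $t\mapsto\phi_k'(t)$ is Lipschitz on $[0,\alpha_k]$ with constant $L\norm{\eta_k}_{x_k}^2$. Subtracting $\phi_k'(0)$ from the curvature condition gives $\phi_k'(\alpha_k)-\phi_k'(0)\ge(c_2-1)\phi_k'(0)$, while the Lipschitz bound gives $\phi_k'(\alpha_k)-\phi_k'(0)\le L\alpha_k\norm{\eta_k}_{x_k}^2$; since $c_2<1$ and $-\phi_k'(0)>0$, these combine to
\[
\alpha_k \ \ge\ \frac{(1-c_2)\bigl(-\ip{g_k}{\eta_k}_{x_k}\bigr)}{L\,\norm{\eta_k}_{x_k}^2}.
\]

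Substituting this into the Armijo condition, $f(x_{k+1})=\phi_k(\alpha_k)\le\phi_k(0)+c_1\alpha_k\phi_k'(0)=f(x_k)-c_1\alpha_k\bigl(-\ip{g_k}{\eta_k}_{x_k}\bigr)$, yields
\[
f(x_k)-f(x_{k+1}) \ \ge\ \frac{c_1(1-c_2)}{L}\,\frac{\ip{g_k}{\eta_k}_{x_k}^2}{\norm{\eta_k}_{x_k}^2}.
\]
Summing over $k=0,1,\dots,N$, the left-hand side telescopes to $f(x_0)-f(x_{N+1})\le f(x_0)-\inf_{x\in M}f(x)$, which is finite because $f$ is bounded below (Assumption~\ref{asm:Lipschitz}); dividing by the positive constant $c_1(1-c_2)/L$ and letting $N\to\infty$ gives \eqref{eq:Zoutendijk}.

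The only step that is not a verbatim copy of the Euclidean argument — and the one I expect to need care — is the Lipschitz estimate for $\phi_k'$: one must pass from the abstract hypothesis that $f\circ\exp_{x_k}$ is Lipschitz continuously differentiable on $\mathrm{span}\{\eta_k\}$ to the concrete bound $\abs{\phi_k'(t)-\phi_k'(s)}\le L\norm{\eta_k}_{x_k}^2\abs{t-s}$, and, crucially, to verify that $L$ is independent of $k$ so that the constant in \eqref{eq:Zoutendijk} is too. This is precisely why Assumption~\ref{asm:Lipschitz} is phrased with a \emph{uniform} Lipschitz constant along the one-dimensional subspaces $\mathrm{span}\{\eta_k\}$, rather than, say, as a bound on the Riemannian Hessian of $f$; once this reduction is secured, the rest is routine one-variable calculus and telescoping.
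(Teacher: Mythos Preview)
The paper does not supply its own proof of Theorem~\ref{thm:Zoutendijk}; it merely states the result and attributes it to Ring--Wirth \cite[Theorem~2]{doi:10.1137/11082885X}. Your argument is the standard Euclidean Zoutendijk proof, correctly transplanted to the Riemannian setting by pulling $f$ back along the geodesic ray $t\mapsto\exp_{x_k}(t\eta_k)$: the identification $\phi_k'(0)=\ip{g_k}{\eta_k}_{x_k}$ and $\phi_k'(\alpha_k)=\ip{g_{k+1}}{\tran_{\alpha_k\eta_k}(\eta_k)}_{x_{k+1}}$ is exactly what the choice $\tran=(d\exp)$ buys, Assumption~\ref{asm:Lipschitz} gives the Lipschitz bound on $\phi_k'$ with a $k$-independent constant, and the rest is telescoping against the lower bound on $f$. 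This is precisely the approach of the cited reference, so there is nothing to contrast.

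One small remark: your appeal to ``$\eta_k$ is a descent direction'' is legitimate here because the statement is phrased for sequences generated by Algorithm~\ref{alg:RCG}, and Theorem~\ref{thm:SDP} (together with the Wolfe curvature condition, which forces $\ip{y_{k-1}}{\tran_{\alpha_{k-1}\eta_{k-1}}(\eta_{k-1})}_{x_k}\neq 0$ inductively) guarantees $\ip{g_k}{\eta_k}_{x_k}<0$; it would be worth saying this explicitly rather than leaving it implicit.
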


\begin{assumption}~\label{asm:A}
The objective function $f:M\rightarrow\real$ in Problem \ref{pbl:main} is smooth, and
there exists a constant $L > 0$ such that, for all $x,y \in M$,
\begin{align}
\norm{\grad f(x) - \tran_{X}(\grad f(y))}_x \leq Ld(x,y), \label{eq:Lipschitz}
\end{align}
where $X \in T_yM$ satisfies $x = \exp_y(X)$.
Furthermore, $f$ is strongly convex, i.e., there exists a constant $\mu > 0$ such that, for all $x \in M$, the smallest eigenvalue of the Riemannian Hessian $\Hess f(x)$ is not less than $\mu$.
\end{assumption}


\subsection{Convergence results}
Our first result is that Algorithm \ref{alg:RCG} including the HZ-type RCG method generates a sufficient descent direction without depending on the line search conditions.

\begin{theorem}~\label{thm:SDP}
Let $\{x_k\}_{k=0,1,\cdots} \subset M$ be a sequence generated by Algorithm \ref{alg:RCG} with $\beta_k \in [\beta_k^\mathrm{HZ}, \max\{\beta_k^\mathrm{HZ},0\}]$
\footnote{The modified HZ \eqref{eq:modified HZ} satisfies $\hat{\beta}_k^\mathrm{HZ} \in [\beta_k^\mathrm{HZ}, \max\{\beta_k^\mathrm{HZ},0\}]$.}.
If $\ip{y_k}{\tran_{\alpha_k\eta_k}(\eta_k)}_{x_{k+1}}\neq 0$, we have
\begin{align}~\label{eq:SDP}
\ip{g_{k}}{\eta_{k}}_{x_{k}} \leq -\left(1-\frac{1}{4\mu}\right)\norm{g_{k}}_{x_{k}}^2.
\end{align}
\end{theorem}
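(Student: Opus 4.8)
The plan is to mimic the Euclidean argument for Hager--Zhang's sufficient descent property (Theorem~1.1 in \cite{doi:10.1137/030601880}), working entirely with inner products on $T_{x_{k+1}}M$. I would argue by induction on $k$. The base case $k=0$ is immediate since $\eta_0 = -g_0$ gives $\ip{g_0}{\eta_0}_{x_0} = -\norm{g_0}_{x_0}^2 \le -(1-1/(4\mu))\norm{g_0}_{x_0}^2$ because $\mu > 1/4$ (which follows from $\mu > 0$ together with the definition~\eqref{eq:oldHZ} requiring $\mu > 1/4$; the relevant quantity here is the parameter $\mu$ appearing in $\beta^\mathrm{HZ}$). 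For the inductive step, write $\eta_{k+1} = -g_{k+1} + \beta_{k+1}\tran_{\alpha_k\eta_k}(\eta_k)$ and take the inner product with $g_{k+1}$ at $x_{k+1}$, so that
\begin{align*}
\ip{g_{k+1}}{\eta_{k+1}}_{x_{k+1}} = -\norm{g_{k+1}}_{x_{k+1}}^2 + \beta_{k+1}\ip{g_{k+1}}{\tran_{\alpha_k\eta_k}(\eta_k)}_{x_{k+1}}.
\end{align*}

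Next I would handle the two possibilities for $\beta_{k+1}$ arising from the constraint $\beta_{k+1} \in [\beta_{k+1}^\mathrm{HZ}, \max\{\beta_{k+1}^\mathrm{HZ}, 0\}]$. If $\beta_{k+1}^\mathrm{HZ} \ge 0$ then $\beta_{k+1} = \beta_{k+1}^\mathrm{HZ}$; if $\beta_{k+1}^\mathrm{HZ} < 0$ then $\beta_{k+1} \in [\beta_{k+1}^\mathrm{HZ}, 0]$. In the second case one must check that the extra negativity does not hurt: the sign of $\ip{g_{k+1}}{\tran_{\alpha_k\eta_k}(\eta_k)}_{x_{k+1}}$ controls whether shrinking $\beta_{k+1}$ toward $0$ increases or decreases $\ip{g_{k+1}}{\eta_{k+1}}_{x_{k+1}}$, and the HZ formula is designed so the bound holds across the whole interval. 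The core estimate is the one for $\beta_{k+1} = \beta_{k+1}^\mathrm{HZ}$: substituting~\eqref{eq:HZ} and~\eqref{eq:HS}, writing $s_k := \ip{y_k}{\tran_{\alpha_k\eta_k}(\eta_k)}_{x_{k+1}}$ (nonzero by hypothesis), $t_k := \ip{g_{k+1}}{\tran_{\alpha_k\eta_k}(\eta_k)}_{x_{k+1}}$, and $u_k := \ip{g_{k+1}}{y_k}_{x_{k+1}}$, the product $\beta_{k+1}^\mathrm{HZ} t_k$ becomes $\dfrac{u_k t_k}{s_k} - \mu\dfrac{\norm{y_k}_{x_{k+1}}^2 t_k^2}{s_k^2}$, so
\begin{align*}
\ip{g_{k+1}}{\eta_{k+1}}_{x_{k+1}} = -\norm{g_{k+1}}_{x_{k+1}}^2 + \frac{u_k t_k}{s_k} - \mu\frac{\norm{y_k}_{x_{k+1}}^2 t_k^2}{s_k^2}.
\end{align*}
Now apply the elementary inequality $u_k t_k/s_k \le \norm{y_k}_{x_{k+1}}\norm{g_{k+1}}_{x_{k+1}} \cdot |t_k|/|s_k| \le \tfrac{1}{4\mu}\norm{g_{k+1}}_{x_{k+1}}^2 + \mu\norm{y_k}_{x_{k+1}}^2 t_k^2/s_k^2$ (Cauchy--Schwarz on $u_k$ followed by $ab \le a^2/(4\mu) + \mu b^2$ with $a = \norm{g_{k+1}}_{x_{k+1}}$ and $b = \norm{y_k}_{x_{k+1}}|t_k|/|s_k|$); the last term cancels exactly, leaving $\ip{g_{k+1}}{\eta_{k+1}}_{x_{k+1}} \le -(1 - 1/(4\mu))\norm{g_{k+1}}_{x_{k+1}}^2$.

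The two facts that make this Riemannian-clean rather than merely Euclidean-transplanted are: first, the Gauss lemma identity~\eqref{eq:preserve}, which is what lets~\eqref{eq:oldHZ} be rewritten as~\eqref{eq:HZ} with $\ip{y_k}{\tran_{\alpha_k\eta_k}(\eta_k)}_{x_{k+1}}$ in the denominator in place of the transported quantity $\ip{g_{k+1}}{\tran_{\alpha_k\eta_k}(\eta_k)}_{x_{k+1}} - \ip{g_k}{\eta_k}_{x_k}$; and second, the fact that all vectors involved ($g_{k+1}$, $y_k$, $\tran_{\alpha_k\eta_k}(\eta_k)$) live in the single tangent space $T_{x_{k+1}}M$, so ordinary Cauchy--Schwarz applies verbatim. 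I expect the main obstacle to be the bookkeeping in the case $\beta_{k+1}^\mathrm{HZ} < 0$ with $\beta_{k+1}$ strictly between $\beta_{k+1}^\mathrm{HZ}$ and $0$: one has to verify, using the sign of $t_k$, that $\beta_{k+1} t_k$ lies between $0$ and $\beta_{k+1}^\mathrm{HZ} t_k$ (or argue a convexity/monotonicity in $\beta_{k+1}$), so that the bound already established at the endpoint $\beta_{k+1} = \beta_{k+1}^\mathrm{HZ}$ and the trivial bound at $\beta_{k+1} = 0$ together cover the whole interval. Everything else is the standard HZ algebra carried out fibrewise.
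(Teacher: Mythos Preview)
Your proposal is correct and follows essentially the same route as the paper: the base case $k=0$, the split into $\beta_{k+1}=\beta_{k+1}^{\mathrm{HZ}}$ versus $\beta_{k+1}^{\mathrm{HZ}}\le\beta_{k+1}\le 0$, and the sign-of-$t_k$ analysis in the latter case all match. The only difference is presentational: the paper offloads the core estimate for $\beta_{k+1}=\beta_{k+1}^{\mathrm{HZ}}$ to \cite[Theorem~3.4]{sakai2020sufficient}, whereas you write the algebra out (Cauchy--Schwarz on $u_k$ followed by $ab\le a^2/(4\mu)+\mu b^2$). One minor remark: you call this an induction, but no inductive hypothesis is ever invoked---the bound at step $k+1$ depends only on quantities at step $k+1$, so the argument is really a direct verification for each $k$.
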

\begin{proof}
For $k = 0$, \eqref{eq:SDP} clearly holds from $\ip{g_0}{\eta_0}_{x_{0}} = -\norm{g_0}_{x_0}^2$.
Subsequently, we assume $k \ge 1$.
If $\beta_k = \beta_k^\mathrm{HZ}$, from \cite[Theorem 3.4]{sakai2020sufficient}, \eqref{eq:SDP} follows.
On the other hand, if $\beta_k\neq\beta_k^\mathrm{HZ}$, then $\beta_k^\mathrm{HZ} \leq \beta_k \leq 0$.
From \eqref{eq:RCG2}, we have
\begin{align*}
\ip{g_{k}}{\eta_{k}}_{x_{k}} = -\norm{g_{k}}_{x_{k}}^2 + \beta_{k}\ip{g_{k}}{\tran_{\alpha_{k-1}\eta_{k-1}}(\eta_{k-1})}_{x_{k}}.
\end{align*}
If $\ip{g_{k}}{\tran_{\alpha_{k-1}\eta_{k-1}}(\eta_{k-1})}_{x_{k}} \geq 0$, then \eqref{eq:SDP} follows immediately since $\beta_{k} \leq 0$.
If $\ip{g_{k}}{\tran_{\alpha_{k-1}\eta_{k-1}}(\eta_{k-1})}_{x_{k}} < 0$, then
\begin{align*}
\ip{g_{k}}{\eta_{k}}_{x_{k}} &= -\norm{g_{k}}_{x_{k}}^2 + \beta_{k}\ip{g_{k}}{\tran_{\alpha_{k-1}\eta_{k-1}}(\eta_{k-1})}_{x_{k}} \\
&\leq -\norm{g_{k}}_{x_{k}}^2 + \beta^\mathrm{HZ}_{k}\ip{g_{k}}{\tran_{\alpha_{k-1}\eta_{k-1}}(\eta_{k-1})}_{x_{k}},
\end{align*}
since $\beta_k^\mathrm{HZ} \leq \beta_k \leq 0$.
Hence, \eqref{eq:SDP} follows by analysis as in \cite[Theorem 3.4]{sakai2020sufficient}.
\end{proof}

The following is the main theorem indicating that the HZ-type RCG method converges globally.

\begin{theorem}~\label{thm:converge1}
Let $\{x_k\}_{k=0,1,\cdots} \subset M$ be a sequence generated by Algorithm \ref{alg:RCG} with $\beta_{k+1} = \beta_{k+1}^\mathrm{HZ}$
under Assumptions \ref{asm:Lipschitz} and \ref{asm:A}.
Suppose that each step size $\alpha_k>0$ satisfies Wolfe conditions \eqref{eq:Wolfe1} and \eqref{eq:Wolfe2}. 
Then either $\norm{g_{k_0}}_{x_{k_0}}=0$ for some $k_0 \in \nat$, or
\begin{align}~\label{eq:converge1}
\lim_{k \to \infty}\norm{g_k}_{x_k} = 0.
\end{align}
\end{theorem}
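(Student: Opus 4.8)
The plan is to follow the classical Hager--Zhang argument (Theorem~2.2 in \cite{doi:10.1137/030601880}) adapted to the Riemannian setting, using the sufficient descent property from Theorem~\ref{thm:SDP}, Zoutendijk's theorem (Theorem~\ref{thm:Zoutendijk}), and the two parts of Assumption~\ref{asm:A}. I would argue by contradiction: suppose $\norm{g_k}_{x_k} \ge \gamma > 0$ for all $k$. First I would record the basic consequences: by Theorem~\ref{thm:SDP}, $\ip{g_k}{\eta_k}_{x_k} \le -(1 - 1/(4\mu))\norm{g_k}_{x_k}^2$, so the search directions are genuine descent directions and Zoutendijk's sum $\sum_k \ip{g_k}{\eta_k}_{x_k}^2 / \norm{\eta_k}_{x_k}^2 < \infty$ forces $\norm{g_k}_{x_k}^4 / \norm{\eta_k}_{x_k}^2 \to 0$, hence (given the lower bound $\norm{g_k}_{x_k}\ge\gamma$) $\norm{\eta_k}_{x_k} \to \infty$, or more precisely $\sum_k 1/\norm{\eta_k}_{x_k}^2 < \infty$.

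The crux is to bound $\norm{\eta_{k+1}}_{x_{k+1}}$ in terms of $\norm{\eta_k}_{x_k}$ and contradict $\sum_k 1/\norm{\eta_k}_{x_k}^2 < \infty$. For this I need the Riemannian analogue of the inequality $\ip{y_k}{\tran_{\alpha_k\eta_k}(\eta_k)}_{x_{k+1}} \ge (1-c_2)\,(-\ip{g_k}{\eta_k}_{x_k}) > 0$, which follows directly from the second Wolfe condition~\eqref{eq:Wolfe2} together with the Gauss-lemma identity~\eqref{eq:preserve}; this is exactly the denominator appearing in $\beta_{k+1}^\mathrm{HZ}$, so it is controlled from below. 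Next I need the key estimate~\eqref{eq:bound1} flagged in the introduction, namely $\ip{y_k}{\tran_{\alpha_k\eta_k}(\eta_k)}_{x_{k+1}} \ge \mu\alpha_k\norm{\eta_k}_{x_k}^2$; as the authors indicate, this should come from strong convexity (uniform lower bound $\mu$ on $\Hess f$) via integrating the Hessian along the geodesic $t \mapsto \exp_{x_k}(t\alpha_k\eta_k)$ and using parallel transport to compare tangent vectors at different points, again invoking the Gauss lemma to match up the transport used in the algorithm with parallel transport. Combining these with the Lipschitz bound~\eqref{eq:Lipschitz}, $\norm{y_k}_{x_{k+1}} \le L\,d(x_k,x_{k+1}) = L\alpha_k\norm{\eta_k}_{x_k}$, yields that each of the two terms defining $\beta_{k+1}^\mathrm{HZ}$ is bounded by a constant multiple of $\norm{g_{k+1}}_{x_{k+1}} / \norm{\eta_k}_{x_k}$ (the truncation-type control that makes the HZ update robust), so that $|\beta_{k+1}^\mathrm{HZ}|\,\norm{\tran_{\alpha_k\eta_k}(\eta_k)}_{x_{k+1}} \le C\norm{g_{k+1}}_{x_{k+1}}$; note that $\norm{\tran_{\alpha_k\eta_k}(\eta_k)}_{x_{k+1}} = \norm{\eta_k}_{x_k}$ by the Gauss lemma~\eqref{eq:preserve}.

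From $\eta_{k+1} = -g_{k+1} + \beta_{k+1}\tran_{\alpha_k\eta_k}(\eta_k)$ and the triangle inequality I then get $\norm{\eta_{k+1}}_{x_{k+1}} \le (1+C)\norm{g_{k+1}}_{x_{k+1}}$, i.e. a \emph{uniform} bound $\norm{\eta_k}_{x_k} \le C'$ using $\norm{g_k}_{x_k} \le \sup_k\norm{g_k}_{x_k} < \infty$ (boundedness of the gradient follows from $f$ being bounded below, the descent of $\{f(x_k)\}$, and the Lipschitz hypothesis, so the iterates stay in a sublevel set). A uniform upper bound on $\norm{\eta_k}_{x_k}$ contradicts $\sum_k 1/\norm{\eta_k}_{x_k}^2 < \infty$ (the tail would diverge), completing the contradiction and forcing $\liminf_k \norm{g_k}_{x_k} = 0$; a standard refinement of the same estimates upgrades $\liminf$ to $\lim$.

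I expect the main obstacle to be the rigorous derivation of~\eqref{eq:bound1}, the inequality $\ip{y_k}{\tran_{\alpha_k\eta_k}(\eta_k)}_{x_{k+1}} \ge \mu\alpha_k\norm{\eta_k}_{x_k}^2$, from the Hessian lower bound. In Euclidean space this is immediate from~\eqref{eq:E_strong_convexity}, but on a manifold one must integrate $\Hess f$ along the geodesic connecting $x_k$ and $x_{k+1}$, carefully track the difference between the algorithm's vector transport $(d\exp)$ and parallel transport, and use the Gauss lemma to reconcile the two; making this clean and not merely heuristic is the delicate part. Everything else — the Zoutendijk consequence, the Lipschitz estimate on $y_k$, and the triangle-inequality bound on $\norm{\eta_{k+1}}_{x_{k+1}}$ — is routine once that inequality is in hand.
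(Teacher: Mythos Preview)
Your proposal follows the same skeleton as the paper's proof: obtain the Zoutendijk-type summability \eqref{eq:sum_bound}, bound $|\beta_{k+1}^{\mathrm{HZ}}|$ by combining a strong-convexity lower bound \eqref{eq:bound1} on $\ip{y_k}{\tran_{\alpha_k\eta_k}(\eta_k)}_{x_{k+1}}$ with the Lipschitz upper bound on $\norm{y_k}_{x_{k+1}}$, deduce $\norm{\eta_{k+1}}_{x_{k+1}}\le C\norm{g_{k+1}}_{x_{k+1}}$, and conclude. Two points of divergence are worth noting.

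First, the paper does not argue by contradiction. After obtaining $\norm{\eta_{k+1}}_{x_{k+1}}\le C\norm{g_{k+1}}_{x_{k+1}}$, it substitutes this directly into \eqref{eq:sum_bound} to get $\sum_k\norm{g_k}_{x_k}^2<\infty$, which immediately yields the full limit \eqref{eq:converge1}. Your contradiction framing is unnecessary (the bound on $\norm{\eta_{k+1}}$ does not use the hypothesis $\norm{g_k}\ge\gamma$), it forces an extra appeal to an upper bound on $\norm{g_k}$ via sublevel-set compactness, and it leaves you with a ``standard refinement'' from $\liminf$ to $\lim$ that is not automatic. Dropping the contradiction and arguing as the paper does removes all three issues at once.

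Second, for the key estimate \eqref{eq:bound1} the paper takes a different route from the one you sketch: it applies a second-order Taylor expansion to $t\mapsto f(\exp_{x_k}(t\eta_k/\norm{\eta_k}_{x_k}))$, uses the Hessian lower bound on the quadratic term, and then combines with the Armijo condition \eqref{eq:Wolfe1} to arrive at $\ip{y_k}{\tran_{\alpha_k\eta_k}(\eta_k)}_{x_{k+1}}\ge\gamma\alpha_k\norm{\eta_k}_{x_k}^2$ with $\gamma=\mu(1-c_2)/(2(1-c_1))$. Your proposed direct integration of the Hessian along the geodesic in fact works and is cleaner: since $c(s):=\exp_{x_k}(s\eta_k)$ is a geodesic, $\tran_{\alpha_k\eta_k}(\eta_k)=c'(\alpha_k)$ is precisely the parallel-transported velocity, so by \eqref{eq:preserve} one has $\ip{y_k}{\tran_{\alpha_k\eta_k}(\eta_k)}_{x_{k+1}}=(f\circ c)'(\alpha_k)-(f\circ c)'(0)=\int_0^{\alpha_k}\ip{\Hess f(c(s))[c'(s)]}{c'(s)}\,ds\ge\mu\alpha_k\norm{\eta_k}_{x_k}^2$, with the sharper constant $\mu$ and without invoking \eqref{eq:Wolfe1}. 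The ``delicate'' reconciliation of $(d\exp)$ with parallel transport you worried about does not arise here, because only the radial vector $\eta_k$ is transported in \eqref{eq:bound1}, and in that direction the two transports coincide.
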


\begin{proof}
If $g_{k_0} = 0$ for some $k_0 \in \nat$, then \eqref{eq:converge1} obviously follows.
Assume that $g_k \neq 0$ for all $k \in \nat$.
From \eqref{eq:preserve} and \eqref{eq:Wolfe2}, we have
\begin{align*}
(c_2-1) \ip{g_k}{\eta_k}_{x_k} \leq \ip{g_{k+1}-\tran_{\alpha_k\eta_k}(g_{k})}{\tran_{\alpha_k\eta_k}(\eta_k)}_{x_{k+1}}.
\end{align*}
Moreover, \eqref{eq:Lipschitz} and the Cauchy--Schwarz inequality imply
\begin{align*}
\ip{g_{k+1}-\tran_{\alpha_k\eta_k}(g_{k})}{\tran_{\alpha_k\eta_k}(\eta_k)}_{x_{k+1}} &\leq \norm{g_{k+1}-\tran_{\alpha_k\eta_k}(g_{k})}_{x_{k+1}}\norm{\tran_{\alpha_k\eta_k}(\eta_k)}_{x_{k+1}} \\
&=\norm{g_{k+1}-\tran_{\alpha_k\eta_k}(g_{k})}_{x_{k+1}}\norm{\eta_k}_{x_{k}} \\
&\leq L\alpha_k\norm{\eta_k}_{x_{k}}^2.
\end{align*}
Therefore, we obtain
\begin{align*}
(c_2-1) \ip{g_k}{\eta_k}_{x_k} \leq L\alpha_k\norm{\eta_k}_{x_{k}}^2,
\end{align*}
which with Theorem \ref{thm:SDP} implies
\begin{align}~\label{eq:lower_step}
\alpha_k \geq \frac{1-c_2}{L}\frac{\abs{\ip{g_k}{\eta_k}_{x_k}}}{\norm{\eta_k}_{x_{k}}^2}.
\end{align}
Moreover,
\begin{align}
\ip{y_k}{\tran_{\alpha_k\eta_k}(\eta_k)}_{x_{k+1}} &= \ip{g_{k+1}-\tran_{\alpha_k\eta_k}(g_k)}{\tran_{\alpha_k\eta_k}(\eta_k)}_{x_{k+1}} \nonumber \\
&= \ip{g_{k+1}}{\tran_{\alpha_k\eta_k}(\eta_k)}_{x_{k+1}} - \ip{\tran_{\alpha_k\eta_k}(g_k)}{\tran_{\alpha_k\eta_k}(\eta_k)}_{x_{k+1}} \nonumber \\
&= \ip{g_{k+1}}{\tran_{\alpha_k\eta_k}(\eta_k)}_{x_{k+1}} - \ip{g_k}{\eta_k}_{x_k} \nonumber \\
&\geq c_2\ip{g_k}{\eta_k}_{x_k} - \ip{g_k}{\eta_k}_{x_k} \nonumber \\
&= (c_2-1)\ip{g_k}{\eta_k}_{x_k}, \label{eq:temp1}
\end{align}
where the third equation comes from \eqref{eq:preserve}, and the inequality comes from \eqref{eq:Wolfe2}.
Furthermore, we define $\phi_{x,\mu}(t) := f(\exp_x(t\mu))$.
From Taylor's theorem, we have
\begin{align*}
f(x_{k+1}) - f(x_k) &= f(\exp_{x_k}(\alpha_k\eta_k)) - f(x_k) \\
&= \phi_{x_k,\frac{\eta_k}{\norm{\eta_k}_{x_k}}}(\alpha_k\norm{\eta_k}_{x_k}) - \phi_{x_k,\frac{\mu_k}{\norm{\eta_k}_{x_k}}}(0) \\
&= \phi^\prime_{x_k,\frac{\eta_k}{\norm{\eta_k}_{x_k}}}(0)\alpha_k\norm{\eta_k}_{x_k} +\frac{1}{2} \phi^{\prime\prime}_{x_k,\frac{\eta_k}{\norm{\eta_k}_{x_k}}}(\theta)(\alpha_k\norm{\eta_k}_{x_k})^2,
\end{align*}
for some $\theta \in [0, \alpha_k\norm{\eta_k}_{x_k}]$.
Moreover, by defining $c_{x,\mu}(t) := \exp_{x}(t\mu)$, we have
\begin{align*}
\phi''_{x_k,\frac{\eta_k}{\norm{\eta_k}_{x_k}}}(\theta)
& = \ip{\Hess f(x_k)\left[c'_{x_k,\frac{\eta_k}{\norm{\eta_k}_{x_k}}}(\theta)\right]}{c'_{x_k,\frac{\eta_k}{\norm{\eta_k}_{x_k}}}(\theta)}_{c_{x_k,\frac{\eta_k}{\norm{\eta_k}_{x_k}}}(\theta)}\\
& \ge \mu\norm{c'_{x_k,\frac{\eta_k}{\norm{\eta_k}_{x_k}}}(\theta)}_{c_{x_k,\frac{\eta_k}{\norm{\eta_k}_{x_k}}}(\theta)}^2\\
& = \mu\norm{\frac{\eta_k}{\norm{\eta_k}_{x_k}}}_{x_k}^2\\
& = \mu
\end{align*}
from the strong convexity of $f$.
Using this evaluation of $\phi''$, we obtain
\begin{align*}
f(x_{k+1}) - f(x_k) & \geq \ip{g_k}{\frac{\eta_k}{\norm{\eta_k}_{x_k}}}_{x_k} \alpha_k\norm{\eta_k}_{x_k} + \frac{\mu}{2}(\alpha_k\norm{\eta_k}_{x_k})^2 \\
&=  \alpha_k\ip{g_k}{\eta_k}_{x_k} + \frac{\mu}{2}(\alpha_k\norm{\eta_k}_{x_k})^2 
\end{align*}
Therefore, we obtain
\begin{align}~\label{eq:diff1}
f(x_{k+1}) - f(x_k) \geq \alpha_k\ip{g_k}{\eta_k}_{x_k} + \frac{\mu}{2}(\alpha_k\norm{\eta_k}_{x_k})^2.
\end{align}
From \eqref{eq:Wolfe1} and \eqref{eq:diff1}, we have
\begin{align}~\label{eq:temp2}
\ip{g_k}{\eta_k}_{x_k} \leq \frac{\mu}{2(c_1-1)}\alpha_k\norm{\eta_k}_{x_k}^2.
\end{align}
Therefore, from \eqref{eq:temp1} and \eqref{eq:temp2}, we obtain
\begin{align}~\label{eq:bound1}
\ip{y_k}{\tran_{\alpha_k\eta_k}(\eta_k)}_{x_{k+1}} \geq \gamma \alpha_k \norm{\eta_k}_{x_k}^2,
\end{align}
where
\begin{align*}
\gamma := \frac{\mu(1-c_2)}{2(1-c_1)}.
\end{align*}
The assumption $g_k \neq 0$ implies that $\eta_k \neq 0$,
which together with $\alpha_k > 0$ yield $\ip{y_k}{\tran_{\alpha_k\eta_k}(\eta_k)}_{x_{k+1}} \neq 0$.
From \eqref{eq:Wolfe1} and the lower boundedness of $f$,
\begin{align*}
\sum_{k=0}^\infty c_1\alpha_k\ip{g_k}{\eta_k}_{x_k} &\geq \sum_{k=0}^\infty (f(x_{k+1}) - f(x_k)) \\
& = \lim_{j \to +\infty}f(x_j) - f(x_0) > -\infty ,
\end{align*}
which implies
\begin{align*}
\sum_{k=0}^\infty \alpha_k\ip{g_k}{\eta_k}_{x_k} > -\infty .
\end{align*}
Combining this with the lower bound for $\alpha_k$ given in \eqref{eq:lower_step} and the sufficient descent property in Theorem \ref{thm:SDP} gives
\begin{align}~\label{eq:sum_bound}
\sum_{k=0}^\infty\frac{\norm{g_k}_{x_k}^4}{\norm{\eta_k}_{x_k}^2} < \infty .
\end{align}
From \eqref{eq:Lipschitz}, we obtain
\begin{align}~\label{eq:bound2}
\norm{y_k}_{x_{k+1}} = \norm{g_{k+1} - \tran_{\alpha_k\eta_k}(g_k)}_{x_{k+1}} \leq L\alpha_k\norm{\eta_k}_{x_k}.
\end{align}
From \eqref{eq:HZ}, \eqref{eq:HS}, \eqref{eq:bound1} and \eqref{eq:bound2}, we have
\begin{align*}
\abs{\beta_{k+1}^\mathrm{HZ}} &= \abs{\frac{\ip{g_{k+1}}{y_{k}}_{x_{k+1}}}{\ip{y_{k}}{\tran_{\alpha_k\eta_k}(\eta_k)}_{x_{k+1}}} - \mu\frac{\norm{y_{k}}_{x_{k+1}}^2\ip{g_{k+1}}{\tran_{\alpha_k\eta_k}(\eta_k)}_{x_{k+1}}}{\ip{y_{k}}{\tran_{\alpha_k\eta_k}(\eta_k)}_{x_{k+1}}^2}} \\
&\leq \frac{\norm{g_{k+1}}_{x_{k+1}}\norm{y_{k}}_{x_{k+1}}}{\gamma \alpha_k \norm{\eta_k}_{x_k}^2}
+ \mu\frac{\norm{y_{k}}_{x_{k+1}}^2\norm{g_{k+1}}_{x_{k+1}}\norm{\eta_k}_{x_{k}}}{\gamma^2 \alpha_k^2 \norm{\eta_k}_{x_k}^4} \\
& \leq \frac{L\alpha_k\norm{\eta_k}_{x_k}\norm{g_{k+1}}_{x_{k+1}}}{\gamma \alpha_k \norm{\eta_k}_{x_k}^2}
+ \mu\frac{L^2\alpha_k^2\norm{\eta_k}_{x_k}^3\norm{g_{k+1}}_{x_{k+1}}}{\gamma^2 \alpha_k^2 \norm{\eta_k}_{x_k}^4} \\
&= \left( \frac{L}{\gamma}+\frac{\mu L^2}{\gamma^2}\right)\frac{\norm{g_{k+1}}_{x_{k+1}}}{\norm{\eta_k}_{x_k}}.
\end{align*}
Hence, we have
\begin{align*}
\norm{\eta_{k+1}}_{x_{k+1}} &= \norm{-g_{k+1} + \beta_{k+1}^\mathrm{HZ}\tran_{\alpha_{k}\eta_{k}}(\eta_{k})}_{x_{k+1}} \\
& \leq \norm{g_{k+1}} + \abs{\beta_{k+1}^\mathrm{HZ}} \norm{\eta_{k}}_{x_{k}} \\
& \leq \left(1 + \frac{L}{\gamma}+\frac{\mu L^2}{\gamma^2}\right)\norm{g_{k+1}}_{x_{k+1}}.
\end{align*}
Combining this upper bound with \eqref{eq:sum_bound}, we obtain
\begin{align*}
\sum_{k=0}^\infty \norm{g_k}_{x_k}^2 < \infty ,
\end{align*}
which completes the proof.
\end{proof}

\subsection{Comparison of HZ-type ECG method with HZ-type RCG method}
Let us consider $M = \mathbb{R}^n$. 
Then, $\beta_{k+1}^\mathrm{HZ}$ defined by \eqref{eq:HZ} can be expressed
\begin{align*}
\beta_{k+1}^\mathrm{HZ} 
&= \beta^\mathrm{HS}_{k+1} -
\mu\frac{\norm{y_{k}}_{x_{k+1}}^2\ip{g_{k+1}}{\tran_{\alpha_k\eta_k}(\eta_k)}_{x_{k+1}}}{\ip{y_{k}}{\tran_{\alpha_k\eta_k}(\eta_k)}_{x_{k+1}}^2}\\
&= 
\frac{\bm{y}_{k}^\top \bm{g}_{k+1}}{\bm{\eta}_k^\top \bm{y}_{k}}
- \mu \frac{\| \bm{y}_{k} \|^2 \bm{g}_{k+1}^\top \bm{\eta}_k}{(\bm{y}_{k}^\top \bm{\eta}_{k})^2}\\
&=
\frac{1}{\bm{\eta}_k^\top \bm{y}_{k}} 
\left( \bm{y}_{k}  - \mu \frac{\| \bm{y}_{k} \|^2}{\bm{\eta}_k^\top \bm{y}_{k}} \bm{\eta}_k  \right)^\top \bm{g}_{k+1},
\end{align*}
which implies that $\beta_{k+1}^\mathrm{HZ}$ defined by \eqref{eq:HZ} with $\mu = 2$ coincides with (1.3) in \cite{hager2005new} used in the HZ-type ECG method.
Inequality \eqref{eq:SDP} with $\mu = 2$ (see Theorem \ref{thm:SDP}) is the sufficient descent property of the HZ-type RCG method; i.e.,
\begin{align*}
\ip{g_{k}}{\eta_{k}}_{x_{k}} \leq - \frac{7}{8} \norm{g_{k}}_{x_{k}}^2,
\end{align*}
which, together with $M = \mathbb{R}^n$, implies (1.9) in Theorem 1.1 of \cite{hager2005new}:
\begin{align*}
\bm{g}_{k}^\top \bm{\eta}_{k} \leq - \frac{7}{8} \norm{\bm{g}_{k}}^2.
\end{align*}
Accordingly, Theorem \ref{thm:SDP} is a natural extended result of Theorem 1.1 in \cite{hager2005new} to a Riemannian manifold.

Theorem 2.2 in \cite{hager2005new} implies that the HZ-type ECG method converges globally if the Wolfe conditions hold and if 
\begin{itemize}
\item $f \colon \mathbb{R}^n \to \mathbb{R}$ is strongly convex with a constant $c > 0$ and $\nabla f \colon \mathbb{R}^n \to \mathbb{R}^n$
is Lipschitz continuous with Lipschitz constant $L > 0$ on the level set $\mathcal{L} := \{\bm{x} \in \mathbb{R}^n \colon f(\bm{x}_0) \leq f(\bm{x})\}$.
\end{itemize}
Theorem \ref{thm:converge1} in this paper is satisfied under the Wolfe conditions and Assumptions \ref{asm:Lipschitz} and \ref{asm:A}, i.e.,
\begin{enumerate}
\item[(i)] $f \colon M\to \mathbb{R}$ is smooth and bounded below; $f \circ \exp_{x_k} \colon T_{x_k}M \to \real$ is Lipschitz continuously differentiable on $\mathrm{span}\{\eta_k\}$ with uniform Lipschitz constant $L>0$;
\item[(ii)] There exists a constant $L > 0$ such that, for all $x,y \in M$,
\begin{align}\label{eq:Lipschitz1}
\norm{\grad f(x) - \tran_{X}(\grad f(y))}_x \leq Ld(x,y), 
\end{align}
where $X \in T_yM$ satisfies $x = \exp_y(X)$. Furthermore, $f$ is strongly convex; i.e., there exists constant $\mu > 0$ such that, for all $x \in M$, the smallest eigenvalue of the Riemannian Hessian $\Hess f(x)$ is not less than $\mu$.
\end{enumerate}

Under the Euclidean space setting, $f$ is strongly convex with a constant $c$ if and only if the smallest eigenvalue of $\nabla^2 f (\bm{x})$ for any $\bm{x} \in \mathbb{R}^n$ is not less than $c$.
Moreover, \eqref{eq:Lipschitz1} in the Euclidean space setting is the same as the existence of $L>0$ such that, for all $\bm{x},\bm{y}\in \mathbb{R}^n$,
\begin{align}\label{L}
\| \nabla f (\bm{x}) - \nabla f (\bm{y})\| \leq L \|\bm{x}-\bm{y}\|,
\end{align}
that is, $\nabla f$ is Lipschitz continuous. Obviously, the strong convexity of $f$ implies that $f$ is bounded below.
Following Assumption 4.1 and Remark 4.1 in \cite{sato2021riemannian}, we can see that, in the Euclidean space setting,
the Lipschitz continuity of $f \circ \exp_{x_k}$ (see (i)) is equivalent to \eqref{L}. Therefore, Theorem \ref{thm:converge1} is a natural extended result of Theorem 2.2 in [3] to a Riemannian manifold.

%

\section{Numerical Experiments}\label{sec:4}
We compared the performances of the HZ method with existing RCG methods, i.e., the FR, DY, PRP, HS, HS--DY hybrid, and FR--PRP hybrid methods.
We solved two Riemannian optimization problems (Problem \ref{pbl:rayleigh} and \ref{pbl:stability}) on the unit sphere on a MacBook Air laptop computer (2020) with a 1.1-GHz Intel Core i3 CPU,
8-GB 3733-MHz LPDDR4X memory, and the Catalina 10.15.7 OS. The algorithms were written in Python 3.9.12. Each problem was solved 100 times with each algorithm, that is, 200 times in total.

We used a line search algorithm \cite[Algorithm 3]{sakai2020sufficient} for the strong Wolfe conditions \eqref{eq:Wolfe1} and \eqref{eq:Wolfe3} with $c_1 = 10^{-4}$ and $c_2 = 0.9$. If 
\begin{align*}
\norm{\grad f(x_k)}_{x_k} < 10^{-6},
\end{align*}
was satisfied, we determined that the sequence had converged to an optimal solution.

For comparison, we calculated performance profile $P_s:\real\rightarrow[0,1]$ \cite{dolan2002benchmarking},
defined as follows. Let $\mathcal{P}$ and $\mathcal{S}$ be the set of problems and solvers, respectively. For each $p \in \mathcal{P}$ and $s \in \mathcal{S}$, we define
\begin{align*}
t_{p,s} := (\text{iterations or time required to solve problem }p\text{ by solver }s).
\end{align*}
We define performance ratio $r_{p,s}$ as
\begin{align*}
r_{p,s} := \frac{t_{p,s}}{\min_{s^\prime \in \mathcal{S}}t_{p,s^\prime}}
\end{align*}
and define the performance profile for all $\tau \in \real$ as
\begin{align*}
P_{s}(\tau) := \frac{\abs{\{p \in \mathcal{P} : r_{p,s} \leq \tau\}}}{\abs{\mathcal{P}}},
\end{align*}
where $\abs{A}$ denotes the number of elements in set $A$.

\subsection{The Rayleigh quotient minimization problem on the unit sphere}
Problem \ref{pbl:rayleigh} is the Rayleigh-quotient minimization problem on the unit sphere.
The optimal solutions are the unit eigenvectors of $A$ associated with the smallest eigenvalue (see \cite[Chapter 4.6]{absil2008optimization}).

\begin{problem}\label{pbl:rayleigh}
For a symmetric positive-definite matrix $A$,
\begin{align*}
\text{minimize}\quad& f(x) := x^\top Ax,\\
\text{subject to}\quad& x \in \mathbb{S}^{n - 1} := \{ x \in \real^n : \norm{x} = 1 \},
\end{align*}
where $\norm{\cdot}$ denotes the Euclidean norm.
\end{problem}

In the experiments, we generated a matrix $A$ randomly with $n = 100$ by using \texttt{sklearn.datasets.make\_spd\_matrix}.

Figure \ref{fig1} plots the performance profile of each algorithm versus the number of iterations.
It shows that the HZ method had much better performance than the FR, DY, PRP, and HS methods.
Figure \ref{fig2} plots the performance profile of each algorithm versus the elapsed time.
It also shows that the performance of the HZ method was much better than those of the FR, DY, PRP, and HS methods.
The two hybrid methods had even better performance. In particular, the figures show that they are suitable for solving Problem \ref{pbl:rayleigh}.

\begin{figure}[htbp]
\centering
 \includegraphics[scale=0.5]{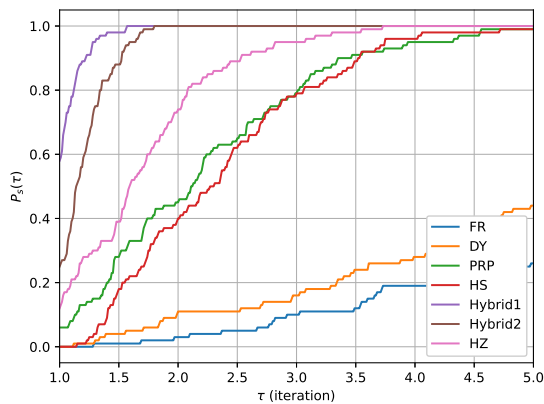}
\caption{Performance profile versus number of iterations for Problem \ref{pbl:rayleigh}. \label{fig1}}
\end{figure}

\begin{figure}[htbp]
\centering
 \includegraphics[scale=0.5]{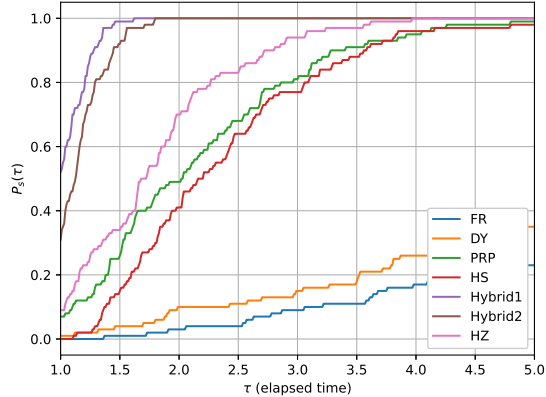}
\caption{Performance profile versus elapsed time for Problem \ref{pbl:rayleigh}. \label{fig2}}
\end{figure}

\subsection{Computation of stability number}
We define the stability number $S(G)$ of an undirected graph $G = (E, V)$ as the size of the maximum stable set in $G$.
Motzkin and Straus showed that solving the stability number of graphs problem is equivalent to solving Problem \ref{pbl:stability} \cite{motzkin1965maxima}.

\begin{problem}\label{pbl:stability}
For an undirected graph $G = (E,V)$,
\begin{align*}
\text{minimize}\quad& f(x) := \sum_{i=1}^n x_i^4 + \sum_{(i,j) \in E} x_i^2x_j^2, \\
\text{subject to}\quad& x \in \mathbb{S}^{n - 1} := \{ x \in \real^n : \norm{x} = 1 \},
\end{align*}
where $n := \abs{V}$ and $\norm{\cdot}$ denotes the Euclidean norm.
\end{problem}

In the experiments, we generated a graph $G = (E, V)$ randomly with $n = 100$ by using \texttt{networkx.fast\_gnp\_random\_graph}.
We set the probability for edge creation to 0.1.

Figure~\ref{fig3} plots the performance profile of each algorithm versus the number of iterations.
It shows that the HZ method had much better performance than the existing methods.
Figure~\ref{fig4} plots the performance profile of each algorithm versus the elapsed time.
It also shows that the performance of the HZ method was much better than those of the existing methods.
In particular, the figures show that the HZ method is suitable for solving Problem \ref{pbl:stability}.

\begin{figure}[htbp]
\centering
 \includegraphics[scale=0.5]{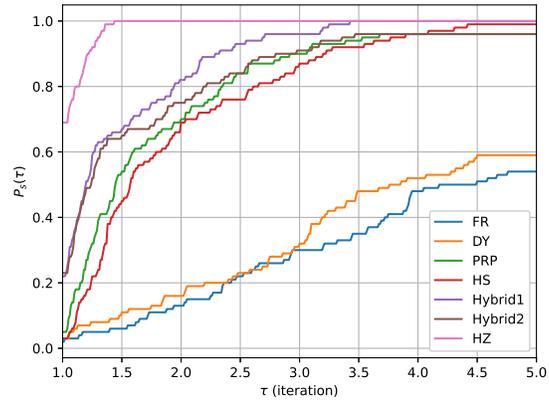}
\caption{Performance profile versus number of iterations for Problem \ref{pbl:stability}. \label{fig3}}
\end{figure}

\begin{figure}[htbp]
\centering
 \includegraphics[scale=0.5]{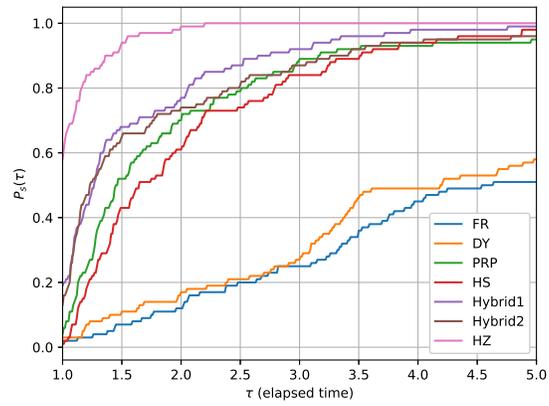}
\caption{Performance profile versus elapsed time for Problem \ref{pbl:stability}. \label{fig4}}
\end{figure}

\section{Conclusion}\label{sec:5}
We have presented a Hager--Zhang (HZ)-type Riemannian conjugate gradient method that uses exponential retraction and presented two global convergence properties under different assumptions.
We numerically compared the performance of the proposed method with those of existing Riemannian conjugate gradient methods for two Riemannian optimization problems on the unit sphere.
The results show that the HZ method has much better performance than the FR, DY, PRP, and HS methods.
In particular, we showed that the HZ method is suitable for the stability number of graphs problem.
It had much better performance than existing methods, including hybrid methods, for computing the stability number.

In a future paper, we will present the HZ method using a general retraction and its convergence analyses.

\bibliographystyle{abbrv}
\bibliography{biblib}

\end{document}